\newtheorem{thm}{Theorem}[section]
\newtheorem{thm*}{Theorem}
\newtheorem{lemma}[thm]{Lemma}
\newtheorem{lemma*}[thm*]{Lemma}
\newtheorem{prop}[thm]{Proposition}
\newtheorem{defn}[thm]{Definition}
\DeclareMathOperator{\ran}{ran}
\DeclareMathOperator{\Aut}{Aut}
\newcommand{\from}{\colon}
\newcommand{\cantor}{2^\omega}
\newcommand{\N}{\mathbb{N}}
\newcommand{\R}{\mathbb{R}}
\newcommand{\Q}{\mathbb{Q}}
\newcommand{\Z}{\mathbb{Z}}
\renewcommand{\subset}{\subseteq}
\renewcommand{\supset}{\supseteq}
\newcommand{\union}{\cup}
\newcommand{\bigunion}{\bigcup}
\newcommand{\inters}{\cap}
\newcommand{\biginters}{\bigcap}
\renewcommand{\and}{\,\,\&\,\,}
\renewcommand{\>}{\rangle}
\newcommand{\lecturedate}[1]{} 
\newcommand{\define}[1]{\textbf{#1}} 
\renewcommand{\cantor}{2^\N}
\begin{document}

\author{Andrew S. Marks}
\thanks{This work was done while visiting Benjamin Miller during Spring
2011. The author would like to think Benjamin Miller for his 
hospitality and helpful discussions, and also Clinton Conley, Alekos
Kechris, Fran\c{c}ois Le Ma\^{i}tre, Robin Tucker-Drob, and Carsten
Szardenings for helpful conversations.}

\title[Topological generators]{Topological
generators for full groups of hyperfinite pmp equivalence relations}

\address[Andrew S. Marks]{University of California, Los Angeles}
\email{marks@math.ucla.edu}

\begin{abstract} We give an elementary proof that there are two topological
generators for the full group of every aperiodic hyperfinite probability
measure preserving Borel equivalence relation. Our proof explicitly
constructs topological generators for the orbit equivalence
relation of the irrational rotation of the circle, and then appeals to
Dye's theorem and a Baire category argument to conclude the general case.
\end{abstract}

\maketitle

\section{Introduction}

Le Ma\^{i}tre~\cites{MR3274561, 1405.4745} has completely
elucidated the relationship between the cost of an aperiodic probability measure preserving (pmp) equivalence
relation and its number of topological generators. 
His work answered 
a question of Kechris~\cite{MR2583950}*{Section 4.(D)}, and built on
earlier results of Miller~\cite{MR2583950}*{Section 4.(D)}, Kittrell and
Tsankov~\cite{MR2599891}, and Matui~\cite{MR3103094}.
In this note we give an elementary proof of the following theorem which is
an essential ingredient in Le Ma\^{i}tre's proofs. It is originally due to
Matui~\cite[Theorem 3.2]{MR3103094} in the ergodic case, and Le Ma\^{i}tre
\cite[Theorem 4.1]{1405.4745} in general. 

\begin{thm}[\cite{MR3103094}]\label{general}
  Let $E$ be an aperiodic hyperfinite pmp
  equivalence relation on a standard probability space $(X,\mu)$.
  Then there exists $T \in \Aut(X,\mu)$ generating $E$ and an 
  involution $U \in [E]$ of arbitrarily small support such that $T$ and $U$
  are topological generators for the full group $[E]$.
\end{thm}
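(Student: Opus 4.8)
The plan is to deduce the theorem from the single case of an irrational rotation, for which both the generating automorphism and the small involution can be written down explicitly. Fix an irrational $\alpha$, let $R = R_\alpha$ denote $x \mapsto x + \alpha$ on the circle $\R/\Z$ with Lebesgue measure $\lambda$, and let $E_R$ be the resulting (aperiodic, ergodic, hyperfinite, pmp) orbit equivalence relation. By Dye's theorem every aperiodic ergodic hyperfinite pmp equivalence relation is orbit equivalent to $E_R$, and any orbit equivalence $\theta$ induces an isomorphism $S \mapsto \theta^{-1} S \theta$ of uniform full groups $[E_R] \to [E]$ that is a homeomorphism for the uniform topology, carries automorphisms generating $E_R$ to automorphisms generating $E$, and carries involutions to involutions of the same support measure. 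So it suffices to produce, for every $\epsilon > 0$, an automorphism $T$ generating $E_R$ and an involution $U \in [E_R]$ with $\lambda(\operatorname{supp} U) < \epsilon$ such that $\langle T, U \rangle$ is dense in $[E_R]$ for the uniform topology; the general (possibly non-ergodic) aperiodic case then follows by combining this with the ergodic decomposition and a Baire category argument, the point being that the set of pairs witnessing the conclusion is a $G_\delta$ condition that, once known nonempty, can be selected measurably over the ergodic components.

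For the rotation we take $T = R$, which already generates $E_R$, so the entire problem is to design $U$. The standard reduction is as follows. Writing $p_n/q_n$ for the continued fraction convergents of $\alpha$, the first-return map of $R$ to the interval $I_n = [0, \|q_n\alpha\|)$ is an exchange of finitely many subintervals with return times among $q_n, q_{n+1}$; this produces nested Kakutani--Rokhlin towers whose column equivalence relations $E_n$ (two points equivalent iff they lie in the same column of the same tower) are finite, increase, and satisfy $\bigcup_n E_n = E_R$ modulo null sets, so that $\overline{\bigcup_n [E_n]}$ is all of $[E_R]$ and it is enough to approximate each element of each $[E_m]$ by words in $T$ and $U$. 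Fix $n$ so large that $2\|q_n\alpha\| < \epsilon$, and let $U$ be the involution exchanging $x$ and $R(x)$ for $x \in I_n$ and fixing everything else; its support lies in $I_n \cup R(I_n)$, so $\lambda(\operatorname{supp} U) \le 2\|q_n\alpha\| < \epsilon$, and $U$ transposes the bottom two rungs of every column of the $n$-th tower. On a single column of height $h$, $R$ acts as the $h$-cycle on the lower $h-1$ rungs while $U$ is the transposition of the first two rungs; since an $h$-cycle together with an adjacent transposition generates $S_h$, after accounting for the behaviour of $R$ at the top rung the subgroup generated by $R$ and $U$ should contain a dense set of fiberwise permutations of each tower. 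Conjugating $U$ by powers of $R$ localizes transpositions at every height and, because $R$ is ergodic, over every positive-measure set of column bases, which is what is needed to approximate an arbitrary measurable field of column permutations, i.e. an arbitrary element of $[E_n]$; climbing the taller towers with higher powers of $R$ then yields $[E_m]$ for every $m$.

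The main obstacle is precisely this last, essentially combinatorial, step: verifying that a single rotation together with a single transposition of two short arcs generates a dense subgroup of $[E_R]$. The difficulty is that $R$ does not act as a clean cyclic permutation of a tower --- the image of the top rung is governed by the interval-exchange structure of the return map --- so one must track how the columns of consecutive towers fit together, using the denominator recursion $q_{n+1} = a_{n+1}q_n + q_{n-1}$ and the three-distance theorem for the geometry of the rungs, and then exhibit explicit words in $R$ and $U$ realizing transpositions of prescribed rungs over prescribed sets of bases with controlled uniform error. A secondary point needing care is the uniformity and measurability of these words, so that their action varies as required across the continuum of columns while the accumulated error stays below the target; one argues column by column along the Rokhlin tower and sums the contributions. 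Granting these approximations, density in $[E_R]$ follows from density of $\bigcup_n [E_n]$, and Dye's theorem together with the Baire category observation then yields the statement for every aperiodic hyperfinite pmp equivalence relation.
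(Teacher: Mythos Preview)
Your outline has two genuine gaps, one in each half.

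For the ergodic case you set up the Kakutani--Rokhlin tower picture and correctly identify the crux --- that $R$ together with the transposition $U$ of the bottom two rungs should generate all column permutations --- but you do not carry this step out; you end with ``granting these approximations'' after listing the difficulties. The paper sidesteps the tower combinatorics with a different and much shorter device: take $U = T_{[0,\beta)}$ for a \emph{second} irrational $\beta < \alpha$. Since $T^j U T^{-j} = T_{[j\alpha,j\alpha+\beta)}$ and the $\alpha$-orbit of $0$ is dense, $T_{[\gamma,\gamma+\beta)} \in \overline{\langle T,U\rangle}$ for every $\gamma$; composing two such gives $T_{[0,\epsilon)\cup[\beta,\beta+\epsilon)}$, and the telescoping product
\[
T_{[0,\epsilon)\cup[\beta,\beta+\epsilon)} \circ \cdots \circ T_{[(k-1)\beta,(k-1)\beta+\epsilon)\cup[k\beta,k\beta+\epsilon)} = T_{[0,\epsilon)} \circ T_{[k\beta,k\beta+\epsilon)},
\]
with $k\beta$ chosen close to $\epsilon$ (here is where irrationality of $\beta$ is used), yields $T_{[0,2\epsilon)}$. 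No continued fractions, no tracking of return maps.

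For the non-ergodic case your proposed reduction does not work as stated. Even if $(T,U)$ topologically generates $[E_\nu]$ for almost every ergodic component $\nu$, it need not topologically generate $[E]$: approximating $S \in [E]$ in the uniform metric on $(X,\mu)$ requires a \emph{single} word $w(T,U)$ close to $S$ simultaneously on most fibers, whereas fiberwise density gives only fiber-dependent words. Measurably selecting a good pair over the decomposition does not address this. The paper's fix is to run the Baire category argument in a different space: parametrize generators by continuous $f \in C(2^\N,(1/2,1))$ via $T^f(x,y)=(x,y+f(x))$, prove a finite version (Lemma~\ref{ncircles}: $n$ rotations by $\Q$-independent irrationals sharing the same $\beta$) that supplies one word working uniformly across all $n$ circles, and then show that for comeager $f$ the pair $(T^f, T^f_{2^\N\times[0,\beta)})$ generates $[E_{T^f}]$ for \emph{every} Borel probability measure $\mu$ on $2^\N$. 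That uniformity over $\mu$ is exactly what your selection argument is missing.
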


Our definitions and notations follow~\cite{MR2583950}. A Borel equivalence
relation $E$ on a standard probability space $(X,\mu)$ is said to be
\define{aperiodic}
if every $E$-class is infinite, and \define{measure preserving} if every
partial Borel injection $T \from X \to X$ such that $x \mathrel{E} T(x)$
almost everywhere is $\mu$-measure preserving. 
An equivalence relation $E$ on $(X,\mu)$ is
\define{hyperfinite} if it is the orbit equivalence relation of a Borel
action of $\Z$ (see
\cite{MR2095154}*{Theorem 6.6}).
$\Aut(X,\mu)$ is the space of measure preserving
automorphisms of $(X,\mu)$, identifying elements that agree almost
everywhere. If $T \in \Aut(X,\mu)$, we will let $E_T$ note the equivalence
relation on $X$ generated by $T$.

If $E$ is a measure preserving Borel
equivalence relation on $(X,\mu)$, then the \define{full group} $[E]$ of $E$ is the
set of $T \in \Aut(X,\mu)$ such that $x \mathrel{E} T(x)$ almost
everywhere. If every $E$-class is countable, then the full group $[E]$ is a Polish group when equipped with the
\define{uniform topology} given by the metric $d(S,T) = \mu(\{x : S(x) \neq
T(x)\}$. Elements $T_0, T_1, \ldots \in \Aut(X,\mu)$ are said to be
\define{topological generators} for $[E]$ if the closure of $\<T_0, T_1,
\ldots\>$ in the uniform topology is equal to $[E]$.

\section{The irrational rotation of the circle}

\begin{defn}[\cite{MR2599891}]
Suppose $(X,\mu)$ is a standard probability space and $T \in \Aut(X,\mu)$.
If $A \subset X$ is a
set such that $A$ and $T(A)$ are disjoint, then define 
the
involution $T_A: X \to X$ as
follows:
\[T_A(x) = \begin{cases} T(x) & \text{ if $x \in A$} \\
T^{-1}(x) & \text{ if $x \in T(A)$} \\
x & \text{ otherwise}
\end{cases}
\]
\end{defn}

We note the following:

\begin{prop}\label{prop1} Suppose $T \in \Aut(X,\mu)$. Then 
$T \circ T_{A}  \circ T^{-1} = T_{T(A)}$ and hence $T_{T^k(A)} \in
\<T,T_{n,A}\>$ for every $k \in \Z$. \qed
\end{prop}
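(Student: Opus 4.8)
The plan is to verify the conjugation identity $T \circ T_A \circ T^{-1} = T_{T(A)}$ by a direct pointwise case analysis, and then to derive the membership statement by a short induction on $|k|$.

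First I would record the small preliminary that for any Borel set $B \subset X$ with $B \inters T(B) = \emptyset$, applying the bijection $T$ gives $T(B) \inters T^2(B) = \emptyset$, so that $T_{T(B)}$ is defined whenever $T_B$ is; in particular this applies with $B = A$. Next I would compute $T \circ T_B \circ T^{-1}$ on each piece of the partition $\{T(B),\, T^2(B),\, X \setminus (T(B) \union T^2(B))\}$. Since $T$ is a bijection, it carries the partition $\{B,\, T(B),\, X \setminus (B \union T(B))\}$ onto this one, so for $x$ in one of the three pieces, $T^{-1}(x)$ lies in the corresponding piece of the first partition; evaluating $T_B$ there and applying $T$ yields $T(x)$ when $x \in T(B)$, $T^{-1}(x)$ when $x \in T^2(B)$, and $x$ otherwise. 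That is precisely the definition of $T_{T(B)}$, so $T \circ T_B \circ T^{-1} = T_{T(B)}$, and the case $B = A$ is the first assertion.

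For the second assertion I would induct on $|k|$. Applying the bijection $T^k$ to $A \inters T(A) = \emptyset$ shows $T^k(A) \inters T^{k+1}(A) = \emptyset$ for every $k \in \Z$, so each $T_{T^k(A)}$ is indeed defined. The base case $k = 0$ is trivial, as $T_A \in \<T, T_A\>$. If $T_{T^k(A)} \in \<T, T_A\>$ for some $k \geq 0$, then the identity with $B = T^k(A)$ gives $T_{T^{k+1}(A)} = T \circ T_{T^k(A)} \circ T^{-1} \in \<T, T_A\>$; symmetrically, applying the identity with $T^{-1}$ in place of $T$ (permissible since $T^k(A) \inters T^{k-1}(A) = \emptyset$) gives $T_{T^{k-1}(A)} = T^{-1} \circ T_{T^k(A)} \circ T \in \<T, T_A\>$. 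This covers every $k \in \Z$. No step here is a genuine obstacle; the only things to watch are tracking the disjointness hypotheses so that every $T_{T^k(A)}$ that appears is legitimately defined, and lining up the three cases in the definition of $T_B$ correctly under the relabeling by $T$.
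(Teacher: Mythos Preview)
Your argument is correct. The paper omits the proof entirely (the statement ends with a bare \qed), treating both the conjugation identity and its consequence as routine; your pointwise case analysis and induction on $|k|$ are exactly the expected verification, and you were right to track the disjointness hypotheses so that each $T_{T^k(A)}$ is well defined.
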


If $T \in \Aut(X,\mu)$, then to 
verify some set topologically generates $[E_T]$, it will suffice to show that we can approximate
elements of the form $T_A$. 
\begin{prop}[{\cite[Proposition 4.4]{MR2599891}}]\label{KT}
Suppose $(X,\mu)$ is a standard probability space, and $T \in \Aut(X,\mu)$.
Then $\{T_A : \text{$A \subset X$ is Borel}\}$ is a set of topological
generators for $[E_T]$. 
\end{prop}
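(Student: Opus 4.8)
The plan is to show that $[E_T]$ equals the closure of the subgroup $\<T_A : A \subset X \text{ Borel}\>$ (where $A$ ranges over Borel sets with $A \cap T(A) = \emptyset$). Since every such $T_A$ is itself a measure preserving element of $[E_T]$, this closure is automatically contained in $[E_T]$, so it suffices to prove density: given $S \in [E_T]$ and $\epsilon > 0$, I must exhibit a finite product of maps $T_A$ lying within $\epsilon$ of $S$ in the uniform metric $d$. Write $S(x) = T^{\phi(x)}(x)$ for a Borel $\phi \from X \to \Z$ (unique off the $T$-periodic points; for the rest of the sketch I assume $T$ aperiodic and comment on the general case at the end). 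Because $\mu$ is a probability measure and $\phi$ is finite-valued, I first fix $N$ so large that $\mu(\{x : |\phi(x)| > N\})$ is as small as desired, and then apply Rokhlin's lemma to obtain a Borel tower $B, T(B), \dots, T^{n-1}(B)$ of pairwise disjoint sets covering all but a tiny set, with the height $n$ chosen huge relative to $N$.

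The key step is to pass from $S$ to a nearby $S' \in [E_T]$ that permutes each Rokhlin column $\{b, T(b), \dots, T^{n-1}(b)\}$ within itself. On the ``good'' set $G$ of tower points $T^i(b)$ with $N \le i \le n-1-N$ and $|\phi(T^i(b))| \le N$ — whose measure is close to $1$ by the choice of $N$ and $n$ — the assignment $i \mapsto i + \phi(T^i(b))$ is an injection into $\{0,\dots,n-1\}$, with injectivity coming from that of $S$, and I extend it, Borel-uniformly in $b$, to a permutation $\pi_b$ of $\{0,\dots,n-1\}$. Setting $S'(T^i(b)) = T^{\pi_b(i)}(b)$ on the tower and $S' = \id$ off it, one checks that $S'$ is a Borel bijection which is piecewise a power of $T$, hence lies in $[E_T]$; that $S'$ is measure preserving, since on each piece $\{T^i(b) : \pi_b(i) = j\}$ it agrees with $T^{j-i}$ and these pieces partition $X$ just as their $S'$-images do; and that $S' = S$ on $G$, so that $d(S, S') < \epsilon$.

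It then remains to write $S'$ as a finite product of maps $T_A$. I would partition $B$ into the finitely many Borel pieces $B_\sigma = \{b : \pi_b = \sigma\}$ indexed by permutations $\sigma$ of $\{0,\dots,n-1\}$; the transformations $S'_\sigma$ acting as $\sigma$ on the columns over $B_\sigma$ and as the identity elsewhere have pairwise disjoint supports, so they commute and their product is $S'$. Writing each $\sigma$ as a product of adjacent transpositions $(i,i+1)$ with $i \le n-2$, a direct check shows $S'_\sigma$ equals the corresponding product of maps $T_{T^i(B_\sigma)}$: each is defined because $T^i(B_\sigma)$ and $T(T^i(B_\sigma)) = T^{i+1}(B_\sigma)$ are disjoint, and $T_{T^i(B_\sigma)}$ swaps levels $i$ and $i+1$ of every column over $B_\sigma$ while fixing everything else. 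Hence $S'$, and so $S$ to within $\epsilon$, is a finite product of maps $T_A$. When $T$ is not aperiodic, the aperiodic part of $T$ is handled verbatim, while on each set of points of least $T$-period $k$ — only finitely many of which carry non-negligible measure — the same argument applies with columns of height $k$ and the symmetric group on $\{0,\dots,k-1\}$, no truncation being needed there.

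The step I expect to be the main obstacle is the construction of $S'$: one has to marry the Rokhlin tower with the truncation at displacement $N$ so as to land on a genuine measure preserving element of $[E_T]$ — not merely a Borel bijection — that simultaneously approximates $S$ and permutes each column within itself, because only at that point does the problem reduce to the elementary fact that the symmetric group is generated by adjacent transpositions, applied uniformly across the finitely many column-types $B_\sigma$. Getting the bookkeeping right so that $S'$ comes out measure preserving (and not just almost everywhere defined or Borel) is the part that needs real care; everything after it is routine.
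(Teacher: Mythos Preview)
Your argument is correct, but it takes a different route from the paper's. The paper does not prove this proposition from scratch: it simply cites Proposition~4.4 of Kittrell--Tsankov and records two easy observations---that the involutions $T_A$ generate $E_T$ as an equivalence relation, and that every element of the full group $[E_{T_A}]$ is itself of the form $T_{A'}$ for some Borel $A' \subset A$---from which the statement follows immediately. Your approach, by contrast, is self-contained: you invoke Rokhlin's lemma to replace $S$ by a nearby $S'$ that permutes the levels of each column, partition the base according to the finitely many column-permutations that occur, and then factor each such permutation into adjacent transpositions, each of which is visibly a $T_{T^i(B_\sigma)}$. The trade-off is that the paper's derivation is two lines long but imports a black-box result (whose proof, in fact, also runs through Rokhlin towers), whereas yours is longer but makes the approximating word completely explicit and requires nothing beyond Rokhlin's lemma and the fact that $S_n$ is generated by adjacent transpositions. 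Your treatment of the periodic part is also fine: $S$ preserves each period-$k$ set, and for fixed $\epsilon$ all but finitely many of these have negligible measure, so they can be ignored.
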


To derive this from \cite[Proposition
4.4]{MR2599891}, note that $E_T$ is generated by
$\{T_A : \text{$A \subset X$ is Borel}\}$ and that every element of
$[E_{T_A}]$ is of the form $T_{A'}$ for some $A' \subset A$.

\begin{thm}[\cite{MR3103094}]\label{ergodic}
  Let $E$ be an ergodic aperiodic hyperfinite pmp
  equivalence relation on a standard probability space $(X,\mu)$.
  Then there exists $T \in \Aut(X,\mu)$ generating $E$ and an 
  involution $U \in [E]$ of arbitrarily small support such that $T$ and $U$
  are topological generators for the full group $[E]$.
\end{thm}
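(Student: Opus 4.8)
The plan is to reduce the general ergodic case to a single concrete model---an irrational rotation of the circle---and to exhibit topological generators there by an explicit hands-on argument. By Dye's theorem any two ergodic aperiodic hyperfinite pmp equivalence relations on standard probability spaces are orbit equivalent, and an orbit equivalence is implemented by a measure-preserving isomorphism $\phi$ of the underlying spaces. Conjugation by $\phi$ is then an isomorphism of the corresponding full groups as Polish groups with the uniform topology, it carries a transformation generating one relation to one generating the other, and it preserves the measure of supports. So it suffices to prove the theorem when $X = \mathbb{R}/\mathbb{Z}$ with Lebesgue measure $\mu$ and $E = E_T$ for $T$ the rotation $x \mapsto x + \alpha$ by a fixed irrational $\alpha$: this $T$ generates $E$ by definition, and the task becomes, given $\epsilon > 0$, to produce a Borel set $A$ with $A \cap T(A) = \emptyset$ and $\mu(A) < \epsilon/2$ such that $T$ and $U := T_A$---whose support then has measure $< \epsilon$---topologically generate $[E_T]$.

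To verify topological generation I would invoke Proposition~\ref{KT}. Since $\overline{\langle T, U\rangle}$ is a subgroup of $[E_T]$, it is enough to show that it contains $T_B$ for every Borel $B$ with $B \cap T(B) = \emptyset$, and since $B \mapsto T_B$ is continuous in measure, it is enough to handle $B$ ranging over a family dense in the measure algebra. For a rotation the natural such family, furnished by the three-distance (Steinhaus) theorem, consists of unions of ``levels'' and of subintervals of levels of Kakutani--Rokhlin towers: for each scale $n$ the points $0,\alpha,\ldots,(q-1)\alpha$, with $q$ a continued-fraction denominator of $\alpha$, cut $\mathbb{R}/\mathbb{Z}$ into intervals of at most two lengths, organizing the circle, up to a null set, into two columns over interval bases on which $T$ simply moves one level up. The second tool is Proposition~\ref{prop1}, which puts $T_{T^k A}$ in $\langle T, U\rangle$ for every $k \in \mathbb{Z}$.

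The core of the proof is then the choice of $A$ together with the verification that the $T_{T^k A}$ and the powers of $T$ approximate all the tower transformations. I would take $A$ to be a disjoint union $\bigsqcup_n A_n$ where $A_n$ is a short subinterval sitting inside (a suitable $T$-translate of) the base of the scale-$n$ tower, positioned so that the translates of $A_n$ occurring in that tower are pairwise disjoint, with $\sum_n \mu(A_n) < \epsilon/2$. At a fixed scale, the mutual disjointness of the relevant level pieces means the involutions $T_{T^k A_n}$ act as honest transpositions of those pieces, so they generate every finitely supported permutation of them; composing with powers of $T$ to slide pieces along the column, and superposing the contributions of the various scales, one approximates $T_B$ for $B$ in the dense family, and Proposition~\ref{KT} then finishes the argument.

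The main obstacle is exactly this last point: to arrange one single set $A$, of total measure below $\epsilon/2$, whose $T$-orbit pieces simultaneously (i) avoid, at every scale at once, the overlaps that would break the ``transpose the levels'' picture, and (ii) are rich enough that words in the $T_{T^k A}$ and powers of $T$ reach a dense set of tower transformations rather than merely the level permutations of a single tower. This is the combinatorial bookkeeping that reconciles the different scales of the continued-fraction tower structure using only the two given transformations; by contrast, the Dye reduction, the density of the tower family, and the continuity of $B \mapsto T_B$ are routine.
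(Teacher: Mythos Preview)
Your reduction via Dye's theorem, the invocation of Proposition~\ref{KT}, and the use of Proposition~\ref{prop1} to move pieces along the $T$-orbit all match the paper's setup. The divergence is in the choice of $A$ and the verification that $\langle T, T_A\rangle$ is dense.

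The paper takes $A$ to be a \emph{single} interval $[0,\beta)$ for any irrational $\beta<\alpha$, and the entire verification is a short telescoping trick: conjugating $U=T_{[0,\beta)}$ by $T^k$ with $k\alpha$ close to $\epsilon$ puts $T_{[\epsilon,\beta+\epsilon)}$ in the closure; composing with $U$ gives $T_{[0,\epsilon)\cup[\beta,\beta+\epsilon)}$; a telescoping product of translates of this (now exploiting the irrationality of $\beta$) collapses to $T_{[0,\epsilon)}\circ T_{[k\beta,k\beta+\epsilon)}$, which for suitable $k$ is approximately $T_{[0,2\epsilon)}$. No towers, no continued fractions, no multi-scale construction---just the interplay of the two irrationals $\alpha$ and $\beta$.

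Your approach instead builds $A$ as an infinite disjoint union $\bigsqcup_n A_n$ indexed by continued-fraction scales and leans on the three-distance tower structure. This could perhaps be pushed through, but as you yourself flag, the ``main obstacle''---arranging a single small $A$ whose orbit pieces behave correctly at every scale simultaneously---is left unresolved. What you have submitted is therefore a strategy with an acknowledged gap at precisely the point where the real content lies, not a proof. The paper's argument shows that all of this machinery is unnecessary: one interval and a second irrational $\beta$ do the work that your tower hierarchy was meant to do.
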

\begin{proof}
By Dye's theorem~\cite{MR0131516} (see \cite{MR2095154}*{Section 7}), all
ergodic aperiodic pmp actions of $\Z$ are orbit
equivalent (i.e. conjugate by a measure preserving bijection). Hence, it is enough to prove the theorem for a single such
action. Let $\Z$ act on the circle $\R/\Z$ via an irrational
rotation $T(x) = x + \alpha$ (that is, $\alpha$ is irrational). This action
preserves Haar measure. 
We will identify elements of $\R/\Z$ with their coset representatives
in $[0,1)$ and use the usual
ordering on $[0,1)$ to compare them. Fix any irrational
$\beta < \alpha$.  
Our two topological generators for $[E_T]$ will be $T$ and $U = 
T_{[0,\beta)}$.

By Proposition~\ref{KT}, we only need to verify that $T_A \in
\overline{\<T,U\>}$ for every Borel set $A \subset \R/\Z$. Since every such
$A$ can be approximated arbitrarily well by a disjoint union of small
intervals, it suffices to show that for sufficiently small $\epsilon > 0$, $\overline{\<T,U\>}$ contains $T_{[x,x +
\epsilon)}$ for every $x$. Hence, by Proposition~\ref{prop1}, it
suffices to show that $T_{(0,\epsilon)} \in \overline{\<T,U\>}$ for
sufficiently small $\epsilon$, since orbits of the irrational rotation are
dense. 

So suppose $\epsilon > 0$ is sufficiently small. 
Then 
$T_{[\epsilon,\beta + \epsilon)} \in \overline{\<T,U\>}$ by 
Proposition~\ref{prop1}, since there are $k \in \Z$ such that $k\alpha$ is
arbitrarily close to $\epsilon$ in $\R/\Z$. 
Hence, $T_{[0,\beta)} \circ
T_{[\epsilon,\beta + \epsilon)} = T_{[0,\epsilon) \union
[\beta,\beta+\epsilon)} \in \overline{\<T,U\>}$. Next, observe 
\[T_{[(k-1)\beta, (k-1)\beta + \epsilon)
\union [k\beta,k\beta + \epsilon)} = T_{[(k-1)\beta, (k-1)\beta +
\epsilon)} \circ T_{[k \beta, k\beta + \epsilon)} \in
\overline{\<T,U\>}\] 
for every $k \geq 1$ by 
Proposition~\ref{prop1}. Now we have a telescoping series: 
\[T_{[0, \epsilon) \union [\beta, \beta + \epsilon) }
\circ \ldots \circ T_{[(k-1)\beta, (k-1)\beta + \epsilon) \union [k\beta,k\beta +
\epsilon)} = T_{[0,\epsilon)} \circ T_{[k\beta, k\beta+\epsilon)} \in
\overline{\<T,U\>}\]
Since $\beta$ is irrational, there must exist some $k$ such that $k
\beta$ is arbitrarily close to $\epsilon$. So $T_{[0,\epsilon)
\union [\epsilon, 2\epsilon)} = T_{[0,2\epsilon)} \in \overline{\<T,U\>}$. 
\end{proof}

\section{The non-ergodic case}

We now consider $E$ that are not ergodic. We begin with 
the case of $n$ irrational rotations of the circle.
We will use this special case to prove Theorem~\ref{general}. Below, we
will let $n = \{0,\ldots, n-1\}$. 

\begin{lemma}\label{ncircles}
  Consider the standard probability space $(n \times
  \R/\Z,\nu \times \lambda)$ where $\nu$ is the uniform measure on $n$ and
  $\lambda$ is Haar measure on $\R/\Z$. Let $\alpha_0, \ldots,
  \alpha_{n-1}$ be irrational and linearly independent over $\Q$, and 
  let $T \from n \times \R/\Z \to n \times \R/\Z$ be the map $T((i,x)) =
  (i,x + \alpha_i)$. Suppose $\beta \in (0,1)$ is an
  irrational number smaller than $\alpha_0, \ldots, \alpha_{n-1}$. Then $T$
  and $U = T_{n \times [0,\beta)}$ are topological generators for the full group $[E_T]$. 
\end{lemma}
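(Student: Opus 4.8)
The plan is to reduce the statement to the ergodic case already proved in Theorem~\ref{ergodic}, one circle at a time; the one new ingredient will be an application of the rational independence of the $\alpha_i$ to manufacture elements of $\overline{\<T,U\>}$ that live on a single circle. First I would carry out the routine reductions: by Proposition~\ref{KT} it is enough to show $T_A\in\overline{\<T,U\>}$ for every Borel $A\subseteq n\times\R/\Z$, and writing $A=\bigsqcup_{i<n}\{i\}\times A_i$ we have $T_A=\prod_{i<n}T_{\{i\}\times A_i}$, a product of commuting involutions with pairwise disjoint supports, so it suffices to show $T_{\{i_0\}\times B}\in\overline{\<T,U\>}$ for each $i_0<n$ and each Borel $B\subseteq\R/\Z$. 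Fix $i_0$ and let $T_{i_0}$ denote the rotation $x\mapsto x+\alpha_{i_0}$ of $\R/\Z$, so that $T$ restricts on the $i_0$-th circle to $T_{i_0}$, and conjugation by $T^m$ translates the $i_0$-th circle by $m\alpha_{i_0}$.

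The key step will be to show that for all sufficiently small $\epsilon>0$ the involution
\[
Y_\epsilon:=T_{\{i_0\}\times\left([0,\epsilon)\cup[\beta,\beta+\epsilon)\right)},
\]
which is the identity off the $i_0$-th circle, lies in $\overline{\<T,U\>}$. This is where rational independence is used. By Weyl's theorem the $\Z$-orbit $\{(k\alpha_0,\dots,k\alpha_{n-1}):k\in\Z\}$ is dense in $(\R/\Z)^n$, so I may pick integers $k_j$ with $k_j\alpha_{i_0}\to\epsilon$ and $k_j\alpha_i\to 0$ for every $i\neq i_0$. The elements $U\circ(T^{k_j}UT^{-k_j})\in\<T,U\>$ restrict, on the $i$-th circle for $i\neq i_0$, to $(T_i)_{[0,\beta)}\circ(T_i)_{[k_j\alpha_i,\,k_j\alpha_i+\beta)}$, which converges to $(T_i)_{[0,\beta)}^2=\id$; and on the $i_0$-th circle to $(T_{i_0})_{[0,\beta)}\circ(T_{i_0})_{[k_j\alpha_{i_0},\,k_j\alpha_{i_0}+\beta)}$, which converges to $(T_{i_0})_{[0,\beta)}\circ(T_{i_0})_{[\epsilon,\beta+\epsilon)}=(T_{i_0})_{[0,\epsilon)\cup[\beta,\beta+\epsilon)}$ — the last identity being exactly the computation carried out in the proof of Theorem~\ref{ergodic}, valid for $\epsilon$ small. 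Since for fixed $V$ the map $C\mapsto V_C$ is continuous and composition is continuous in the uniform topology, $U\circ(T^{k_j}UT^{-k_j})\to Y_\epsilon$, and as $\overline{\<T,U\>}$ is closed this gives $Y_\epsilon\in\overline{\<T,U\>}$.

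Finally I would transplant the ergodic argument onto the $i_0$-th circle. In the proof of Theorem~\ref{ergodic}, once the ``starter'' involution $T_{[0,\epsilon)\cup[\beta,\beta+\epsilon)}$ is known to lie in the closure, every subsequent element is built from it only by conjugating by powers of the rotation, composing, and passing to limits (the translated involutions in the telescoping product arise this way via Proposition~\ref{prop1}, and the product collapses to $T_{[0,2\epsilon)}$, from which the conclusion follows). Performing precisely these operations with $Y_\epsilon$ in place of the starter, each element produced is a product of $T$-conjugates of $Y_\epsilon$ and so, since $Y_\epsilon$ is the identity off the $i_0$-th circle, is itself the identity off that circle, while on the $i_0$-th circle it executes the corresponding step of the ergodic argument for the pair $\left(T_{i_0},(T_{i_0})_{[0,\epsilon)\cup[\beta,\beta+\epsilon)}\right)$ (all translations needed there are available because $\{k\alpha_{i_0}:k\in\Z\}$ is dense in $\R/\Z$). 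Since that argument places every involution $(T_{i_0})_C$ in the closure, we obtain $T_{\{i_0\}\times C}\in\overline{\<T,U\>}$ for every Borel $C$; running this for each $i_0<n$ and invoking Proposition~\ref{KT} completes the proof.

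The main difficulty I anticipate is the key step: because neither $T$ nor $U$ is supported on a single circle, one cannot literally ``restrict to the $i_0$-th coordinate'', and the circle has to be isolated through the limiting argument above, which works precisely because rational independence forces the orbit of $(\alpha_0,\dots,\alpha_{n-1})$ to come arbitrarily close to the points $(0,\dots,0,\epsilon,0,\dots,0)$ (with $\epsilon$ in the $i_0$-th coordinate). As in the ergodic case, one must also verify the minor point that a single small value of $\epsilon$ is enough when approximating an arbitrary Borel set by a union of intervals.
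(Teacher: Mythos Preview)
Your proposal is correct and follows essentially the same route as the paper: use linear independence of the $\alpha_i$ (via density of the orbit in $(\R/\Z)^n$) to find $k$ with $k\alpha_{i_0}$ close to $\epsilon$ and $k\alpha_i$ close to $0$ for $i\neq i_0$, compose $U$ with its $T^k$-conjugate to obtain the single-circle starter $Y_\epsilon=T_{\{i_0\}\times([0,\epsilon)\cup[\beta,\beta+\epsilon))}$, and then replay the telescoping argument of Theorem~\ref{ergodic} on that circle. The paper's write-up is simply more compressed, stating the same key step and then referring back to the ergodic proof.
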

\begin{proof}
As in the proof of Theorem~\ref{ergodic}, it is enough to show that 
$\overline{\<T,U\>}$ contains
involutions of the form $T_{\{i\} \times [0,\epsilon)}$ for every $i \in n$
and sufficiently small $\epsilon$.

Suppose $\epsilon > 0$ is sufficiently small. 
Consider the vectors $e_0 = (1,0,0,\ldots)$, $e_1 = (0,1,0,0\ldots), \ldots$
in $(\R/\Z)^n$.
Since the $\alpha_i$ are irrational and linearly independent over $\Q$,
there exists $k \in \Z$ with $(k\alpha_0, \ldots, k\alpha_{n-1})$ 
arbitrarily close to $\epsilon e_i$. Hence, there are $k \in \Z$ so that $T^k(n \times [0,\beta))$
is arbitrarily close to $(n \setminus i) \times [0,\beta) \union i \times
[\epsilon, \beta + \epsilon)$. Hence, $T_{(n \setminus i) \times [0,\beta) \union i \times
[\epsilon, \beta + \epsilon)} \in \overline{\<T,U\>}$ and so $T_{i
\times [0,\epsilon) \union i \times [\beta, \beta + \epsilon)} \in \overline{\<T,U\>}$ by
composing with $U$. Following
the reasoning in the proof of Theorem~\ref{ergodic}, we then see that $T_{i
\times [0, 2 \epsilon)} \in \<T,U\>$ as desired. 
\end{proof}

Recall the usual product topology on Cantor
space $\cantor$ is generated by the basic open sets $N_{s} = \{x \in
\cantor : x \supset s\}$ for $s \in 2^{< \N}$.

\begin{proof}[Proof of Theorem~\ref{general}]
  Suppose $f \from \cantor \to (1/2,1)$ is any continuous function such that
  $f(x)$ is irrational for every $x \in \cantor$. Define $T^f
  \from \cantor \times \R/\Z \to \cantor \times \R/\Z$ by the map $T^f(x,y)
  = (x,y + f(x))$.
  By the classification of probability measure preserving actions of
  $\Z$ up to orbit equivalence, if $E$ satisfies the hypothesis of
  the theorem, then there exists some Borel probability measure $\mu$ on
  $\cantor$ such that $E$ is orbit equivalent to the equivalence relation
  $E_{T^f}$ generated by $T^f$ on the space $(\cantor \times \R/\Z, \mu \times
  \lambda)$. (For example, for ergodic $E$, $\mu$ is a measure
  concentrating on a single point)

  Consider the space $C(\cantor,(1/2,1))$ of continuous functions from
  $\cantor$ to $(1/2,1)$ equipped with the compact-open topology. Now it is
  easy to see that for comeagerly many $f \in C(\cantor,(1/2,1))$, $f(x)$
  is irrational for every $x$. Fix an irrational $\beta \in (0,1/2)$. We
  will show that for comeagerly many $f \in C(\cantor,(1/2,1))$, if $\mu$
  is any Borel probability measure on $\cantor$, then $T^f$ and $U_\beta^f
  = T^f_{\cantor \times [0,\beta)}$ are topological generators for the
  equivalence relation $[E_{T^f}]$ (working on the space $(\cantor \times
  R/\Z,\mu \times \lambda)$). This proves the theorem by the above paragraph.

  By Proposition~\ref{KT} it suffices to show that for
  arbitrarily small $\epsilon > 0$ and every $s \in 2^{< \N}$, for every
  Borel probability measure $\mu$ measure on $\cantor$, $T_{N_s \times
  [0,\epsilon)} \in \overline{\<T^f,U^f\>}$ for comeagerly many $f \in
  C(\cantor,(1/2,1))$.

  So fix $s \in 2^{< \N}$, and a sufficiently small $\epsilon > 0$. Suppose $U \subset
  C(\cantor,(1/2,1))$ is an open set and $\delta > 0$. We will show
  that there is an open subset $U' \subset U$ so that for every $f \in U'$
  and Borel probability measure $\mu$ on $\cantor$,
  there is an $W \in \<T^f,U^f\>$ such that $d(W,T^f_{N_s \times
  [0,\epsilon)}) < \delta$ in the uniform topology on $[E_{T^f}]$. 
  This implies that the set of
  $f \in C(\cantor,(1/2,1))$ such that $\<T^f,U^f\>$
  contains an element of distance at most $\delta$ from $T_{N_s \times
  [0,\epsilon)}$ is comeager. But since this is true for every $\delta >
  0$ this implies that $T_{N_s \times [0,\epsilon)} \in \<T^f,U^f\>$
  for comeagerly many $f$.
  
  If $t \in 2^{< \N}$, $\alpha \in \R$, and $\xi > 0$, define the basic
  open set $U_{t,\alpha,\xi} = \{f \in C(\cantor,(1/2,1)) : \forall x \in
  N_t (|f(x) - \alpha| < \xi)\}$. It is easy to see that we can find some
  $m \in \N$, $p \from 2^m \to (1/2,1)$ and $\xi > 0$ such that $\biginters_{t
  \in 2^m} U_{t,p(t),\xi}$ is a nonempty subset of $U$. We may
  assume $m \geq |s|$ and $p$ is such that $\ran(p)$ is a set of irrational
  numbers that are linearly independent over $\Q$. 
  
  Our idea is that every $f \in \biginters_{t \in 2^m} U_{t,p(t),\xi}$ is well
  approximated by the system of $2^m$ irrational rotations generated by
  $T^p
  \from 2^m \times \R/\Z \to 2^m \times \R/\Z$ where $T^p((t,x)) = x + p(t)$.
  Let $A = \bigunion_{\{t \in 2^m : t \supset s\}} t \times [0,\epsilon)$
  and $U^p = T^p_{2^m
  \times [0,\beta)}$. By Lemma~\ref{ncircles}, we can find some finite word
  $W^p \in \<T^p,U^p\>$ such $d(W^p,T^p_{A}) < \delta/2^m$ in the uniform topology on
  $[E_{T^p}]$.   Choose $\xi' < \xi$ to be sufficiently small so that if $p'
  \from 2^m \to [0,1)$ satisfies $\forall t \in 2^m |p(t) - p'(t)| < \xi'$,
  then if $W^{p'}$ is the same word as $W^p$ but in $\<T^{p'},U^{p'}\>$,
  then $d(W^{p'},T^{p'}_{A}) < \delta/2^m$. 
  Let $U' = \inters_{t \in 2^m} U_{t,p(t),\xi'}$. For any $f \in U'$, the same word $W^f$ in $\<T^f,U^f\>$ will witness that $d(W^f,T^f_{N_s \times
  [0,\epsilon)}) < \delta$ in the uniform topology on $[E_{T^f}]$ with
  respect to any measure $\mu$ on $\cantor$ that concentrates on a single
  point. (Note that since we are considering any point measure on $\cantor$,
  the bound of $\delta/2^m$ on $d(W^{p'},T^{p'}_{A})$ has to be enlarged by
  a factor of $2^m$). Finally, this implies that $d(W^f,T^f_{N_s \times
  [0,\epsilon)}) < \delta$ with respect to 
  any measure $\mu$ on $\cantor$.
  \end{proof}

\bibliography{references}

\end{document}